\theoremstyle{plain}
  \newtheorem{theorem}{Theorem}
    \newtheorem{corollary}{Corollary}
  \newtheorem{lemma}{Lemma}
\theoremstyle{definition}
  \newtheorem{definition}[subsection]{Definition}
  \newtheorem{remark}[subsection]{Remark}
\newtheorem{conjecture}{Conjecture}
\newcommand{\R}{\mathbb{R}}
\newcommand{\N}{\mathbb{N}}
\newcommand{\Ncal}{\mathcal{N}}
\newcommand{\Ccal}{\mathcal{C}}
\newcommand{\Acal}{\mathcal{A}}
\newcommand{\Scal}{\mathcal{S}}
\newcommand{\Gcal}{\mathcal{G}}
\begin{document}

\title[]{A Characterization of the Critical Catenoid}
\author[P.~McGrath]{Peter~McGrath}

\date{}
\address{Department of Mathematics, University of Pennsylvania, Philadelphia,
PA 19104}  \email{pjmcgrat@sas.upenn.edu}
\maketitle
\begin{abstract}
We show that an embedded minimal annulus $\Sigma^2 \subset B^3$ which intersects $\partial B^3$ orthogonally and is invariant under reflection through the coordinate planes is the critical catenoid.  The proof uses nodal domain arguments and a characterization, due to Fraser and Schoen, of the critical catenoid as the unique free boundary minimal annulus in $B^n$ with lowest Steklov eigenvalue equal to 1.  We also give more general criteria which imply that a free boundary minimal surface in $B^3$ invariant under a group of reflections has lowest Steklov eigenvalue 1.

\end{abstract}

\section{Introduction}
\label{Sintro}

A classical theorem of Nitsche \cite{Nitsche} states that the only free boundary minimal disks in the unit ball $B^3$ are flat equators.  It is not known whether or not similar rigidity theorems hold for surfaces with larger genus or number of boundary components, but in this direction and Fraser and Li  \cite{FL} made the following conjecture:

\begin{conjecture}[Fraser and Li, \cite{FL}]
\label{conj: FL}
Up to congruences, the critical catenoid is the unique properly embedded free boundary minimal annulus in $B^3$.  
\end{conjecture}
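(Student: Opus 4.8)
The plan is to reduce the conjecture to the Fraser--Schoen characterization quoted above, i.e.\ to show that \emph{every} properly embedded free boundary minimal annulus $\Sigma \subset B^3$ has lowest nonzero Steklov eigenvalue $\sigma_1(\Sigma) = 1$; the uniqueness statement then follows immediately. Recall that since $\Sigma$ is minimal the coordinate functions $x_1,x_2,x_3$ are harmonic on $\Sigma$, and the free boundary condition forces the outward conormal of $\partial\Sigma$ to agree with the position vector, so that $\partial_\nu x_i = x_i$ along $\partial\Sigma$; hence each $x_i$ is a Steklov eigenfunction with eigenvalue $1$. Integrating $\Delta x_i = 0$ over $\Sigma$ gives $\int_{\partial\Sigma} x_i = 0$, so the $x_i$ are admissible test functions of Rayleigh quotient $1$, which already yields $\sigma_1(\Sigma) \le 1$. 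The entire content of the conjecture is therefore the reverse inequality $\sigma_1(\Sigma) \ge 1$, equivalently the assertion that no Steklov eigenfunction of eigenvalue strictly between $0$ and $1$ exists.

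To obtain this lower bound I would attempt to reduce to the symmetric situation, where the eigenvalue estimate is already available, by first proving that an embedded free boundary minimal annulus must itself possess the reflection symmetries of the catenoid. The natural tool is an Alexandrov-type moving-plane argument adapted to the ball: reflections through planes containing the origin preserve $B^3$, map $\partial B^3$ to itself, and send free boundary minimal surfaces to free boundary minimal surfaces, so they are the correct candidate symmetries. Sweeping such a plane across $\Sigma$, one would compare $\Sigma$ with its reflected image using the interior maximum principle together with the Hopf boundary lemma at the points where $\Sigma$ meets $\partial B^3$ orthogonally, aiming to conclude that at the first interior or boundary contact the two pieces coincide. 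Producing three mutually orthogonal planes of symmetry in this way would place $\Sigma$ in the hypotheses of the symmetric characterization and conclude the argument.

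The main obstacle is precisely this symmetrization step, and it is where the full conjecture remains genuinely hard. Because the free boundary condition constrains the reflecting planes to pass through the origin, one cannot translate a plane while keeping it admissible, so the usual continuation ``translate until first contact'' does not apply verbatim; one is instead forced to rotate planes or to work through a Kelvin transform, and the annulus need not be a bigraph over any plane, so both the base case of the sweep and the first-contact analysis are delicate. Failing a clean symmetrization, the alternative is to attack $\sigma_1(\Sigma)\ge 1$ directly: a hypothetical eigenfunction $\phi$ of eigenvalue $\lambda<1$ would, by the Courant-type nodal bound for the Steklov problem, have exactly two nodal domains, and one would try to contradict this using the embeddedness together with the genus-zero, two-boundary-component topology of $\Sigma$. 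This is exactly the argument that in the already-established case was closed using the coordinate reflections, and carrying it out unconditionally---without any assumed symmetry---is the crux that separates the present symmetric theorem from the full Fraser--Li conjecture.
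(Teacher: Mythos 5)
The statement you were asked to prove is Conjecture \ref{conj: FL} itself; the paper does not prove it (it is stated, and remains, open), and establishes only the conditional result your plan would reduce to, namely Theorem \ref{Tmain2}: under the additional hypothesis of invariance under three orthogonal reflections, $\sigma_1(\Sigma)=1$, and Theorem \ref{TFS} then identifies $\Sigma$ as the critical catenoid. Your proposal correctly sets up this reduction (the coordinate functions give $\sigma_1(\Sigma)\le 1$ for free; the whole content is the reverse inequality), but the step that would make it a proof --- producing three orthogonal symmetry planes by a moving-plane argument --- is missing, and, as you yourself observe, cannot be carried out by the standard Alexandrov scheme. Concretely: the free boundary condition restricts admissible reflections to planes through the origin, a compact family; there is no starting plane for which the reflected portion of $\Sigma$ lies strictly to one side (the analogue of ``start from infinity''), no monotone parameter along which reflected images are ordered, and an embedded free boundary annulus need not be a bigraph over any plane through the origin. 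So both the base case and the first-contact step of the sweep fail to exist, rather than merely resist proof. It is worth noting that the analogous closed problem, the Lawson conjecture, was not resolved by symmetrization either: Ros proved the symmetric case (Theorem \ref{Tros}), while Brendle's full proof required an entirely different two-point maximum principle, which is some evidence that a symmetrization step of the kind you propose is not the right mechanism.

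Your fallback --- proving $\sigma_1(\Sigma)\ge 1$ directly from the Courant bound (Lemma \ref{Lcourant}) plus embeddedness and the annulus topology --- also has a concrete hole. In the paper's argument the nodal-domain count is powered by Lemma \ref{Lsym}: when $\sigma_1<1$, a first eigenfunction on a $\Gcal$-invariant surface is itself $\Gcal$-invariant, so its nodal set is $\Gcal$-invariant, and a single nodal line crossing the fundamental domain is multiplied by reflections into enough nodal domains to contradict Lemma \ref{Lcourant}. Without an assumed symmetry group there is no mechanism for producing extra nodal domains: a first eigenfunction with exactly two nodal domains separated by one arc crossing the annulus is a perfectly consistent configuration, and embeddedness alone does not exclude it. So both routes in your proposal genuinely require the symmetry hypothesis. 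What you have written is a correct diagnosis of why Conjecture \ref{conj: FL} is open --- and an accurate sketch of how the symmetric case (Theorem \ref{Tmain2} and Corollary \ref{Tmain}) is actually proved --- but it is not a proof of the conjecture.
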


The study of free boundary minimal surfaces in $B^3$ has features in common with the study of closed minimal surfaces in $S^3$ (see \cite{Brendle:survey} for a recent survey on the latter topic).  A natural  counterpart to Conjecture \ref{conj: FL} in the setting of embedded minimal surfaces in $S^3$ - Lawson's conjecture that the Clifford torus is the only embedded minimal torus in $S^3$, up to congruences - was recently and spectacularly resolved by Brendle \cite{Brendle:lawson}.  Brendle's proof of the Lawson conjecture gives some reason to believe that Conjecture \ref{conj: FL} may be true.  One purpose of this note is to prove the following. 
\begin{theorem}
\label{Tmain2}
Let $\Sigma \subset B^n$, $n\geq 3$ be an embedded free boundary minimal annulus.  Suppose $\Sigma$ is invariant under reflection through three orthogonal hyperplanes $\Pi_i, i=1,2,3$ and $\partial\Sigma \cap \left( B^n\setminus  \bigcup_{i=1}^3 \Pi_i\right) \neq \emptyset$.  Then $\Sigma$ is congruent to the critical catenoid.  
\end{theorem}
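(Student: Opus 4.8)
The plan is to deduce the theorem from the Fraser--Schoen characterization by showing that $\sigma_1(\Sigma)$, the lowest nonzero Steklov eigenvalue of $\Sigma$, equals $1$; the critical catenoid is then the only possibility. After an orthogonal change of coordinates I may assume $\Pi_i = \{x_i = 0\}$ for $i=1,2,3$, so that the symmetry group is $G := \langle R_1,R_2,R_3\rangle \cong (\Z/2\Z)^3$, where $R_i$ denotes the reflection $x_i \mapsto -x_i$. Since $\Sigma$ is a free boundary minimal surface, each coordinate function $x_i$ is harmonic on $\Sigma$ and satisfies $\partial_\nu x_i = x_i$ along $\partial\Sigma$, so $x_i$ is a Steklov eigenfunction with eigenvalue $1$. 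As $x_i$ is odd under $R_i$ and $\Sigma$ is $G$-invariant, $\int_{\partial\Sigma} x_i = 0$, so $x_1,x_2,x_3$ are admissible test functions and $\sigma_1(\Sigma)\le 1$. It remains to prove $\sigma_1(\Sigma)\ge 1$.

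Suppose instead $\sigma_1(\Sigma) < 1$. Because $G$ commutes with the Steklov problem, the first eigenspace decomposes along the eight $1$-dimensional characters of $(\Z/2\Z)^3$, and each isotypic projection of an eigenfunction is again an eigenfunction; hence there is a first eigenfunction $\phi$ transforming under a single character $(\epsilon_1,\epsilon_2,\epsilon_3)\in\{\pm1\}^3$. First I would rule out every nontrivial character. Suppose $\epsilon_i = -1$ for some $i$; then $\phi$ vanishes on $\Sigma\cap\Pi_i$, so on the half-surface $\Sigma_i^+ := \Sigma\cap\{x_i > 0\}$ it restricts to an eigenfunction of the mixed problem that is Steklov with parameter $\sigma_1$ on $\partial\Sigma\cap\{x_i>0\}$ and Dirichlet on $\Sigma\cap\Pi_i$. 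On the same domain $x_i$ is a \emph{positive} solution of the identical mixed problem with parameter $1$, since the nodal set of $x_i$ in $\Sigma$ is exactly $\Sigma\cap\Pi_i$. A positive eigenfunction necessarily realizes the lowest eigenvalue of a mixed Steklov--Dirichlet problem, so that lowest eigenvalue is $1$; comparing with the nonzero eigenfunction $\phi|_{\Sigma_i^+}$ of the same problem forces $\sigma_1(\Sigma)\ge 1$, a contradiction. Thus no first eigenfunction is odd under any $R_i$, and since the first Steklov eigenvalue of the connected surface $\Sigma$ is simple, the unique (up to scale) first eigenfunction $\phi$ must be $G$-invariant.

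The crux is therefore to exclude a $G$-invariant first eigenfunction $\phi$ with $\sigma_1(\Sigma) < 1$; here the mixed-problem trick is unavailable, because a $G$-invariant $\phi$ satisfies Neumann rather than Dirichlet conditions across each $\Pi_i$. I would instead argue through nodal domains: by Courant's theorem $\phi$ has exactly two nodal domains $\Omega_\pm$, each of which is $G$-invariant (as $G$ preserves the sign of $\phi$) and, being a nodal domain of a Steklov eigenfunction, meets $\partial\Sigma$ (otherwise $\phi$ would vanish identically on it by the maximum principle). Analyzing the induced $G$-action on the two boundary circles of the annulus $\Sigma$ --- and using embeddedness together with the hypothesis $\partial\Sigma\cap\bigl(B^n\setminus\bigcup_{i=1}^3\Pi_i\bigr)\neq\emptyset$, which furnishes a boundary orbit of eight distinct points lying off the symmetry planes --- I would show that the geometry of the symmetric embedded annulus forces some reflection $R_i$ to interchange $\Omega_+$ and $\Omega_-$, contradicting the $G$-invariance of $\phi$. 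I expect this final step, reconciling the two-nodal-domain constraint with the reflection symmetry of an \emph{embedded} annulus, to be the main obstacle: it is precisely where the non-degeneracy hypothesis on $\partial\Sigma$ is indispensable, and where the more general criterion advertised in the abstract is to be extracted.
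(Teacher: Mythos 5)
Your reduction to showing $\sigma_1(\Sigma)=1$ via the Fraser--Schoen characterization is exactly the paper's strategy, and your first two steps are sound. The isotypic decomposition under $(\Z/2\Z)^3$ is the same symmetrization the paper performs with the operators $\Acal_{\Pi}$, $\Scal_{\Pi}$, and your mixed Steklov--Dirichlet comparison for killing the odd components is a legitimate alternative to the paper's Lemma \ref{Lsym}: the paper instead integrates the antisymmetric part against the coordinate function $\varphi=\langle X,w\rangle$, getting a strictly positive integral that must vanish by self-adjointness of $L$ when $\sigma_1<1$. (One blemish: your appeal to simplicity of $\sigma_1$ is both false in general --- on the critical catenoid itself $\sigma_1=1$ has multiplicity $3$ --- and unnecessary, since the isotypic decomposition already shows every first eigenfunction is $\Gcal$-invariant once the nontrivial characters are excluded.)

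The genuine gap is the final step, which you acknowledge you have not carried out, and the mechanism you propose for it cannot work. You aim to show that some reflection $R_i$ interchanges the two nodal domains $\Omega_+$ and $\Omega_-$, contradicting $\Gcal$-invariance of $\phi$; but $\Gcal$-invariance of $\phi$ already forces $R_i(\Omega_+)=\Omega_+$ and $R_i(\Omega_-)=\Omega_-$ (each $R_i$ preserves the sign of $\phi$), so no geometric input can produce the swap you are looking for --- the contradiction has to come from somewhere else. What the paper actually does is a nodal-domain \emph{count} on the fundamental domain $D=\Sigma\cap O$, where $O$ is the closed octant: using embeddedness, the annulus topology, and the hypothesis $\partial\Sigma\cap\bigl(B^n\setminus\bigcup_i\Pi_i\bigr)\neq\emptyset$, one shows $D$ is a quadrilateral bounded by an arc $\gamma\subset\partial\Sigma$ and arcs $e_i\subset\Pi_i$, $i=1,2,3$; the condition $\int_{\partial\Sigma}\phi=0$ together with $\Gcal$-invariance forces a nodal line to cross the interior of $\gamma$; and reflecting that nodal line by the eight elements of $\Gcal$ produces at least $4$ (and in some cases $5$ or $9$) nodal domains in $\Sigma$, contradicting the Courant bound of exactly $2$. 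Without this counting argument (or a genuine substitute), your proof does not close.
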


As a consequence, we confirm Conjecture \ref{conj: FL} under a stronger hypothesis: 

\begin{corollary}
\label{Tmain}
Let $\Sigma \subset B^3$ be a embedded free boundary minimal annulus, symmetric with respect to the coordinate planes.  Then $\Sigma$ is congruent to the critical catenoid.  
\end{corollary}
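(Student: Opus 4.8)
The plan is to deduce the Corollary directly from Theorem \ref{Tmain2} by taking the three orthogonal hyperplanes $\Pi_1,\Pi_2,\Pi_3$ to be the coordinate planes $\{x=0\}$, $\{y=0\}$, $\{z=0\}$ in $\R^3$. By hypothesis $\Sigma$ is invariant under reflection through each $\Pi_i$, so the only thing left to verify is the nondegeneracy condition $\partial\Sigma \cap \left(B^3 \setminus \bigcup_{i=1}^3 \Pi_i\right) \neq \emptyset$; once this is in hand, Theorem \ref{Tmain2} applies verbatim and gives the conclusion. I would establish the condition by contradiction, supposing instead that $\partial\Sigma \subset \bigcup_{i=1}^3 \Pi_i$. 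Since $\partial\Sigma \subset \partial B^3 = S^2$, this places $\partial\Sigma$ inside the union of the three coordinate great circles $C_i := \Pi_i \cap S^2$. These circles meet pairwise in the six points $(\pm 1,0,0)$, $(0,\pm 1,0)$, $(0,0,\pm 1)$ and together form the $1$-skeleton of a spherical octahedron, partitioning $S^2$ into eight geodesic triangles; the key structural point is that at each crossing point the two great circles through it are transverse.

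Next I would argue that each connected component $\Gamma$ of $\partial\Sigma$ must coincide with a single $C_i$. Because $\Sigma$ is a free boundary minimal surface, $\Sigma$ and hence $\partial\Sigma$ is real-analytic; alternatively one may use only that $\partial\Sigma$ is a smoothly embedded curve. Away from the six crossing points each $C_i$ is a smooth arc, so on any subarc of $\Gamma$ lying in the interior of an edge, $\Gamma$ agrees with one of the $C_i$. At a crossing point the tangent line of the embedded curve $\Gamma$ is well defined and must be tangent to one of the two transverse circles through it; since $C_i$ is the only circle in the configuration with that tangent direction there, smoothness prevents $\Gamma$ from switching circles, so it continues along the same $C_i$ (equivalently, by analytic continuation $\Gamma$ remains in the great circle it meets). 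Thus $\Gamma$ is a closed embedded curve contained in a single great circle $C_i$, which forces $\Gamma = C_i$.

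Finally I would derive the contradiction. An annulus has exactly two boundary components $\Gamma_1,\Gamma_2$, and these are disjoint. By the previous step each $\Gamma_j$ equals one of the coordinate great circles. But any two distinct coordinate great circles intersect (for instance $C_1 \cap C_2 = \{(0,0,\pm 1)\}$), and a single circle cannot serve as two disjoint boundary components; in either case $\Gamma_1 \cap \Gamma_2 \neq \emptyset$, contradicting the disjointness of the boundary of an embedded annulus. Hence $\partial\Sigma \not\subset \bigcup_{i=1}^3 \Pi_i$, the nondegeneracy hypothesis of Theorem \ref{Tmain2} is satisfied, and the Corollary follows. I expect the only delicate point to be the claim that a boundary component cannot jump between great circles at a crossing, which is exactly where the regularity of the free boundary is used; the remaining steps are elementary.
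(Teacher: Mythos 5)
Your proposal is correct, and it follows the same overall plan as the paper: take the $\Pi_i$ to be the coordinate planes and reduce everything to checking the hypothesis $\partial\Sigma \cap \bigl(B^3 \setminus \bigcup_{i=1}^3 \Pi_i\bigr) \neq \emptyset$ of Theorem \ref{Tmain2}. Where you differ is in how that check is carried out. The paper disposes of it in one sentence: if $\partial\Sigma \subset \bigcup_i \Pi_i$, the convex hull property forces $\Sigma \subset \Pi_i$ for some $i$, so $\Sigma$ is planar and hence a disk, contradicting the annulus hypothesis. (Implicit there is the same observation you make explicitly, namely that a connected smooth boundary component cannot jump between the coordinate great circles at a crossing point, so each component lies in a single plane.) Your argument stays entirely at the level of the boundary: each component of $\partial\Sigma$ must be a full coordinate great circle $C_i$, and the two boundary circles of an embedded annulus are disjoint, whereas any two (not necessarily distinct) coordinate great circles intersect. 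This buys you independence from the convex hull property and from any classification of planar free boundary surfaces, at the cost of the careful transversality/tangency analysis at the six crossing points --- which you handle correctly, since the two circles through each crossing are transverse and a $C^1$ embedded curve has a single tangent line there. Both routes are valid; the paper's is shorter, yours is more self-contained and makes the ``no switching at crossings'' step explicit rather than leaving it buried.
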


Before Brendle's proof, earlier work of Ros established that the conclusion of the Lawson conjecture held with an additional strong assumption of symmetry with respect to the coordinate planes.  In particular, Ros proved (Theorem 6, \cite{Ros}): 

\begin{theorem}[Ros, \cite{Ros}]
\label{Tros}
Let $M \subset S^3$ be an embedded minimal torus, symmetric with respect to the coordinate hyperplanes of $\R^4$.  Then $M$ is the Clifford torus.  
\end{theorem}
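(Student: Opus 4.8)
My plan is to mirror the two-step scheme of the free boundary argument: reduce to a spectral characterization, and then establish the needed eigenvalue equality by a symmetry-assisted nodal domain argument. The spectral input playing the role of the Fraser--Schoen theorem is the fact, due to Montiel--Ros and El Soufi--Ilias, that the Clifford torus is, up to congruence, the unique embedded minimal torus in $S^3$ with first nonzero Laplace eigenvalue $\lambda_1 = 2$. It therefore suffices to prove $\lambda_1(M) = 2$.

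One inequality is free of the symmetry hypothesis. Minimality of $M$ in $S^3$ forces each ambient coordinate function $x_a$, $a = 1, \dots, 4$, to restrict to an eigenfunction with $\Delta_M x_a = -2 x_a$; since $\int_M x_a\, dA = 0$ and $x_a \not\equiv 0$ (otherwise $M$ would lie in a great sphere), the value $2$ belongs to the nonzero spectrum and hence $\lambda_1(M) \le 2$. The essential point is the reverse inequality $\lambda_1(M) \ge 2$ --- a symmetric instance of the still-open Yau conjecture --- and this is where the coordinate reflections are used.

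I would prove $\lambda_1 \ge 2$ by contradiction, supposing a first eigenfunction $u$ with $\Delta_M u = -\lambda_1 u$ and $\lambda_1 < 2$. The group $G \cong (\Z/2)^4$ generated by the coordinate reflections $\sigma_a$, whose fixed-point curves on $M$ are the symmetry loops $M \cap \{x_a = 0\}$, commutes with $\Delta_M$, so $u$ may be taken to transform under a single character of $G$; by Courant's theorem it has exactly two nodal domains. The decisive step is to show, using embeddedness via Ros's two-piece property (itself a Courant-type statement for equatorial sections), that the nodal set $\{u = 0\}$ must coincide with a symmetry loop $M \cap \{x_a = 0\}$ --- which is exactly the nodal set of the coordinate eigenfunction $x_a$. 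Granting this, on a nodal domain $\Omega$ both $u$ and $x_a$ keep a fixed sign and vanish on the common boundary $\Gamma$, so Green's identity reduces (using $u|_\Gamma = 0$) to $(2 - \lambda_1)\int_\Omega x_a u\, dA = \int_\Gamma x_a\, \partial_\nu u\, ds$, and the right side vanishes because $x_a|_\Gamma = 0$. As $x_a u$ has constant sign on $\Omega$ its integral is nonzero, forcing $\lambda_1 = 2$ and contradicting $\lambda_1 < 2$.

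The main obstacle is precisely the geometric step of pinning the nodal set of an a priori arbitrary first eigenfunction to one of the symmetry spheres: this is where embeddedness is indispensable and where the discrete symmetries do the real work, since without them a low eigenfunction might have nodal curves transverse to every symmetry sphere. Executing it calls for a careful case analysis of the character type of $u$ and of how its two nodal domains meet the fixed-point loops on a genus-one surface. Once $\lambda_1 = 2$ is in hand, every coordinate function is a first eigenfunction, so the inclusion $M \hookrightarrow S^3$ is automatically the $\lambda_1$-minimal immersion to which the Clifford torus characterization applies, and the argument closes.
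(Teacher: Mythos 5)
Your outer skeleton matches the paper's: reduce to showing $\lambda_1(M)=2$ and then invoke the Montiel--Ros characterization of the Clifford torus as the minimal torus immersed by first eigenfunctions. The inequality $\lambda_1\le 2$ and the final Green's identity computation are both fine. But the entire content of the proof is concentrated in what you yourself flag as the ``decisive step'' --- that the nodal set of a hypothetical first eigenfunction with $\lambda_1<2$ coincides with a symmetry loop $M\cap\{x_a=0\}$ --- and this step is not only unproven but appears to be unachievable as stated. The orthogonality argument (the analogue of Lemma \ref{Lsym}: since $x_a$ is an eigenfunction with eigenvalue $2>\lambda_1$, the antisymmetric part $\Acal_{\Pi_a}u$ would be a first eigenfunction supported with a sign on one side of $\Pi_a$ and hence not orthogonal to $x_a$) forces $u$ to be \emph{symmetric} under every coordinate reflection, i.e.\ to transform under the trivial character. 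A reflection-symmetric function with exactly two nodal domains cannot vanish precisely on the fixed-point locus of that reflection: the two nodal domains would be interchanged by $R_{\Pi_a}$ while $u$ keeps the same sign on each, forcing $\int_M u\neq 0$. So the object you propose to produce does not exist, and your contradiction never gets off the ground.

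The actual mechanism, both in the paper and in the Choe--Soret argument it cites, is different: one uses the forced symmetry of $u$ to descend to a fundamental domain, shows (by the topological argument in the proof of Theorem \ref{Tmain2}) that this domain is a simply connected quadrilateral, observes that $u$ must change sign on its boundary arc in $\partial B^3$ (resp.\ on the fundamental domain of $M$), and then reflects the resulting nodal line around the group orbit to produce strictly more than two nodal domains, contradicting Courant's theorem (Lemma \ref{Lcourant}). The paper implements this by cutting $M$ along $\Pi_4$ and stereographically projecting one half to a free boundary annulus in $B^3$, verifying that $\Sigma\cap O$ is simply connected and bounded by four arcs, and then citing Theorem 2 of \cite{Choe} to get $\lambda_1=2$. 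To repair your sketch you would need to replace the nodal-set-pinning claim with this nodal-domain-counting argument; as written, the gap is exactly where you located it, and the proposed route through it does not work.
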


As Conjecture \ref{conj: FL} remains open, we present Theorem \ref{Tmain2} and Corollary \ref{Tmain} as a natural step towards its resolution.  Note that
Corollary \ref{Tmain} follows directly from Theorem \ref{Tmain2}: if $\Sigma \subset B^3$ is as in Theorem \ref{Tmain}, it must be that $\partial \Sigma \cap \left(B^3 \setminus \bigcup_{i=1}^3 \Pi_i\right) \neq \emptyset$,  since otherwise the convex hull property would imply $\Sigma \subset \Pi_i$ for some $i\in \{1, 2, 3\}$, which would imply $\Sigma$ is a disk. In fact, the methods used to prove Theorem \ref{Tmain2} also lead to a new proof of Ros' theorem.

Fraser and Schoen have developed a fruitful connection between free boundary minimal surfaces in $B^n$ and the so-called Steklov eigenvalue problem.  One consequence of their work is the following characterization of the critical catenoid (Theorem 6.6, \cite{FS}):
\begin{theorem}[Fraser-Schoen, \cite{FS}]
\label{TFS}
Suppose $\Sigma$ is a free boundary annulus in $B^n$ such that the coordinate functions are first Steklov eigenfunctions.  Then $n=3$ and $\Sigma$ is congruent to the critical catenoid.  
\end{theorem}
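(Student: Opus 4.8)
The plan is to combine the variational characterization of the first Steklov eigenvalue with the holomorphic (Weierstrass) structure of a conformal minimal immersion, and to argue that the equality case $\sigma_1 = 1$ is rigid. First I would record the correspondence that motivates the hypothesis. On any free boundary minimal surface in $B^n$ the coordinate functions are Steklov eigenfunctions with eigenvalue $1$: minimality gives $\Delta_\Sigma x_i = 0$, while the orthogonality of $\Sigma$ to $S^{n-1}$ forces the outward conormal $\nu$ of $\partial\Sigma$ to equal the position vector $X$, so that $\partial_\nu x_i = \langle e_i, X\rangle = x_i$ on $\partial\Sigma$. Integrating shows $\int_{\partial\Sigma} x_i = \int_\Sigma \Delta_\Sigma x_i = 0$, and integration by parts gives $\int_\Sigma |\nabla x_i|^2 = \int_{\partial\Sigma} x_i^2$; hence each $x_i$ is admissible in the Rayleigh quotient with quotient exactly $1$, so that $\sigma_1 \leq 1$ always, and the hypothesis that the coordinate functions are \emph{first} eigenfunctions is precisely the equality $\sigma_1 = 1$.

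Next I would uniformize. An annulus is conformally a flat cylinder $C_T = [0,T]\times S^1$ with induced metric $\lambda^2(dt^2 + d\theta^2)$ and complex coordinate $z = t + i\theta$. Since $X = (x_1,\dots,x_n)$ is a conformal harmonic immersion, the Weierstrass data $\phi := \partial_z X$ is a holomorphic $\mathbb{C}^n$-valued map on $C_T$ obeying $\langle\phi,\phi\rangle = 0$ and $|\phi|^2 = \tfrac12\lambda^2$; I would expand each component in its Laurent series in $e^z$. In these coordinates the free boundary condition reads $X_t = -\lambda X$ on $\{t=0\}$ and $X_t = \lambda X$ on $\{t=T\}$, with $|X| = 1$ on both ends, and these are the constraints the Laurent coefficients must satisfy.

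The heart of the argument, and the step I expect to be the main obstacle, is to show that only the lowest modes survive, i.e.\ that each $x_i$ is a combination of $1$, $t$, and $e^{\pm t}e^{\pm i\theta}$ and that $\lambda = \lambda(t)$. Morally this is because on a rotationally symmetric cylinder the Steklov problem separates by Fourier frequency and the frequency-$k$ sectors with $k \geq 2$ have first eigenvalue strictly above $1$, so a first eigenfunction can carry no such mode: a mode $e^{\pm kt}e^{\pm ik\theta}$ with $k \geq 2$ appearing in some $x_i$ would, by orthogonality of distinct modes on the boundary circles, yield a competitor with Rayleigh quotient below $1$. The difficulty is that a priori $\lambda$ depends on $\theta$, so the problem does not separate and one cannot read off eigenvalues mode by mode; overcoming this requires using the rigidity of the equality case $\sigma_1 = 1$ — the minimizing, not merely eigenfunction, property — together with the conformality relation $\langle\phi,\phi\rangle = 0$, which couples the components and forbids spurious high-frequency data, to conclude that the metric is rotationally symmetric.

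Once the surface is rotationally symmetric I would finish by classical means. A rotationally symmetric minimal surface is a catenoid, so $X$ parametrizes a catenoid lying in the $3$-dimensional subspace spanned by its axis and plane of revolution; in any remaining coordinate direction $x_j$ is a constant $c_j$, and the boundary condition $X_t = \pm\lambda X$ forces $\lambda c_j = 0$, hence $c_j = 0$, giving $n = 3$. Finally the orthogonality condition at $t = 0, T$ determines the modulus $T$ through the transcendental equation governing when a catenoid meets a sphere orthogonally, and the normalization $|X| = 1$ on $\partial\Sigma$ fixes the scale, so that $\Sigma$ is congruent to the critical catenoid, whose first eigenspace indeed has the expected dimension $3$.
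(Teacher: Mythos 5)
This statement is quoted from Fraser--Schoen (Theorem 6.6 of \cite{FS}); the paper you were given uses it as a black box and contains no proof of it, so your attempt can only be measured against the argument in \cite{FS} itself. Your first paragraph (the equivalence of the hypothesis with $\sigma_1(\Sigma)=1$, via $\int_{\partial\Sigma}x_i=0$ and the Rayleigh quotient) and your last paragraph (a rotationally invariant free boundary minimal annulus must be the critical catenoid, and the surface cannot have nonzero constant coordinates) are correct and routine. The problem is that everything in between --- the entire content of the theorem --- is asserted rather than proved, and you say so yourself.

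The gap is the step ``only the lowest Fourier modes survive, hence $\lambda=\lambda(t)$ and the surface is rotationally symmetric.'' The mechanism you propose is circular in exactly the way you flag: the Steklov problem on the cylinder separates into frequency sectors only when the boundary measure is $\lambda\,d\theta$ with $\lambda$ independent of $\theta$, so neither the orthogonality of distinct modes on $\partial\Sigma$ nor the claim that the frequency-$k$ sectors have eigenvalue above $1$ is available before rotational symmetry is established; and differentiating the free boundary condition $\varphi_t=\pm\lambda\varphi$ in $\theta$ produces the error term $(\partial_\theta\log\lambda)\varphi$, so $\varphi_\theta$ is not visibly an eigenfunction either. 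Saying that the rigidity of the equality case ``together with conformality'' overcomes this is naming the obstacle, not removing it. The actual proof in \cite{FS} supplies the missing ingredient in a different order: one first bounds the multiplicity of $\sigma_1$ on a genus-zero surface by $3$ (a Courant/Cheng-type nodal domain argument very much in the spirit of Lemma \ref{Lcourant} of this paper), which immediately forces $n=3$ and shows that the first eigenspace is \emph{exactly} the span of the coordinate functions; only then is the rotational invariance extracted, by showing that the angular variation of $\varphi$ must itself lie in that three-dimensional span, which integrates to invariance under a one-parameter rotation group. In your outline $n=3$ appears only as a corollary of the unproved symmetry, so the multiplicity bound --- the one genuinely nontrivial input --- is absent from your argument and cannot be replaced by the mode-by-mode Rayleigh quotient heuristic. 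As it stands the proposal is an accurate description of why the theorem is hard, not a proof of it.
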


In light of this result, to prove Theorem \ref{Tmain2}, it suffices to prove that the lowest Steklov eigenvalue $\sigma_1(\Sigma)$ on a free boundary minimal annulus $\Sigma$ with the assumed symmetries is $1$.  For this, we adapt a symmetrization technique of Choe and Soret \cite{Choe}, who proved that the lowest eigenvalue of the Laplacian on the Lawson and Karcher-Pinkall-Sterling minimal surfaces in $S^3$ is $2$.  A similar technique was used to estimate the first Steklov eigenvalue for domains in the plane with reflectional symmetries \cite{Proc}.  

More generally, we use these techniques to give general conditions on a free boundary minimal surface $\Sigma \subset B^3$ which guarantee that $\Sigma$ has lowest Steklov eigenvalue equal to $1$.  

Fraser and Schoen \cite{FS} have constructed a family $\{M_n\}_{n\in \N}$ of free boundary minimal surfaces with $\sigma_1(M_n) = 1$  which are characterized by the fact that they maximize $\sigma_1(\Sigma) |\partial \Sigma|$ over all surfaces $\Sigma$ with genus zero and $n$ boundary components.  These surfaces are expected to have reflectional symmetries, and recently Folha, Pacard, and Zolotareva \cite{Z} have described a family of genus $0$ and a family of genus $1$ free boundary minimal surfaces with a large number of boundary components which \emph{are} invariant under a group of reflections.  The genus $0$ family is expected to coincide with the family $\{M_n\}$ when $n$ is large; our methods show that any surfaces with these types of symmetries have lowest Steklov eigenvalue $1$.  This supports a conjecture of Fraser and Li \cite{FL} that $\sigma_1(\Sigma) = 1$ for any embedded free boundary minimal surface $\Sigma \subset B^3$. 

In Section \ref{Ssteklov}, we summarize the connection between free boundary minimal surfaces in $B^n$ and the Steklov eigenvalue problem and review some facts about nodal domains.  In Section \ref{Sproof}, we prove Theorem \ref{Tmain2} and indicate another proof of Ros' theorem.  Finally, in Section \ref{Sgeneral}, we investigate free boundary minimal surfaces $\Sigma \subset B^3$ with more general reflectional symmetries.  

The author thanks Brian Freidin and David Wiygul for helpful discussions, and Alex Mramor for comments on an earlier draft of this paper. Thanks are also due to the Mathematics Department of the University of Calfornia, Irvine for its generous hospitality during the time this work was carried out, and to Richard Schoen, whose inspiring lectures precipitated this work.

%
%
 
\section{The Steklov Eigenvalue Problem and Free Boundary Minimal Surfaces in $B^n$}
\label{Ssteklov}
Let $(\Sigma^n, \partial \Sigma, g)$ be a smooth compact Riemannian manifold with boundary.  Let $\eta$ be the unit outward pointing conormal vector field on $\partial \Sigma$.  The \emph{Steklov Eigenvalue Problem} is
\begin{equation}
\label{eqn: steklov}
  \begin{cases} 
      \hfill \Delta u  = 0    \hfill & \text{in}\quad \Sigma \\
      \hfill \frac{\partial u}{\partial \eta} = \sigma u \hfill & \text{on}\quad \partial \Sigma \\
  \end{cases}
\end{equation}
and we call a function $u$ satisfying \eqref{eqn: steklov} a \emph{Steklov eigenfunction}.  
The eigenvalues of \eqref{eqn: steklov} are the spectrum of the \emph{Dirichlet to Neumann map} $L: C^\infty(\partial \Sigma) \rightarrow C^{\infty}(\partial \Sigma)$ given by
\[ L u = \frac{\partial \hat{u}}{d\eta} \]
where $\hat{u}$ is the harmonic extension of $u$ to $\Sigma$ (that is, $\Delta \hat u = 0$ in $\Sigma$ and $\hat{u} = u$ on $\partial \Sigma$).
It is well known that $L$ is a self-adjoint pseudodifferential operator with discrete spectrum
\[ 0 = \sigma_0 < \sigma_1 \leq \sigma_2 \leq \dots. \]
Furthermore, the first nonzero eigenvalue $\sigma_1$ may be defined variationally by
\begin{align}
\label{Esigma1}
\sigma_1 = \inf_{u\in \Ccal} \frac{ \int_\Sigma |\nabla u |^2 }{\int_{\partial \Sigma}u^2},
\end{align}
where
\[ \Ccal = \{ u \in C^0(\Sigma): u \not\equiv 0,  \int_{\partial \Sigma} u = 0, \text{ and $u$ is piecewise $C^1$}\} .\] 
If $u \in \Ccal$ achieves equality in the Rayleigh quotient \eqref{Esigma1}, then $u$ is an eigenfunction with eigenvalue $\sigma_1$.  Hereafter, we refer to such $u$ as first eigenfunctions.  See \cite{GP} for more information about the Steklov eigenvalue problem.    
\begin{definition}
\label{Dnodal}
Suppose $u$ is a Steklov eigenfunction.  The \emph{nodal set of $u$} is 
\[ \Ncal = \{ p \in \Sigma : u(p) = 0\}.\]
A \emph{nodal domain of $u$} is a connected component of $\Sigma \setminus \Ncal$.  
\end{definition}
It is a standard fact (cf. \cite{Cheng}) that when $n=2$, $\Ncal$ consists of finitely many $C^1$ arcs which intersect at a finite set of points.  We call a piecewise $C^1$ curve in $\Ncal$ a \emph{nodal line}.  The following Courant nodal domain type theorem is standard (see \cite{Cheng}, \cite{Proc}).  We provide a proof for the sake of completeness.
\begin{lemma}
\label{Lcourant}
If $u$ is a first eigenfunction, then $u$ has exactly two nodal domains. 
\end{lemma}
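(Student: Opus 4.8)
The plan is to establish the two bounds on the number of nodal domains separately. For the lower bound, I would first observe that a first eigenfunction $u$ solves \eqref{eqn: steklov} with $\sigma = \sigma_1 > 0$, so integrating $\Delta u = 0$ over $\Sigma$ and applying the divergence theorem together with the boundary condition yields $\sigma_1 \int_{\partial \Sigma} u = 0$, whence $\int_{\partial \Sigma} u = 0$. Since $u \not\equiv 0$ and a harmonic function with identically vanishing boundary data would be identically zero, $u$ must take both signs on $\partial \Sigma$; consequently $u$ has at least one positive and one negative nodal domain.

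For the upper bound I would run a Courant-type argument. Label the nodal domains $\Omega_1, \Omega_2, \dots$ and for each $i$ let $u_i$ equal $u$ on $\Omega_i$ and $0$ on $\Sigma \setminus \Omega_i$; each $u_i$ is continuous (because $u = 0$ along the interior nodal set $\Ncal$) and piecewise $C^1$, hence lies in $\Ccal$ once its boundary average is arranged to vanish. A key preliminary point is that every nodal domain meets $\partial \Sigma$: if some $\partial \Omega_i$ were contained in $\Ncal$, then $u$ would be harmonic in $\Omega_i$ with zero boundary data, forcing $u \equiv 0$ there, a contradiction. Using Green's identity and harmonicity I would then compute
\[ \int_\Sigma |\nabla u_i|^2 = \int_{\Omega_i} |\nabla u|^2 = \int_{\partial \Omega_i} u \, \tfrac{\partial u}{\partial \eta}. \]
Splitting $\partial \Omega_i$ into its piece in $\Ncal$, where $u = 0$, and its piece in $\partial \Sigma$, where $\partial u/\partial \eta = \sigma_1 u$, gives $\int_\Sigma |\nabla u_i|^2 = \sigma_1 \int_{\partial \Sigma} u_i^2$, so each $u_i$ realizes the value $\sigma_1$ in the Rayleigh quotient \eqref{Esigma1}.

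Now suppose toward a contradiction that $u$ has at least three nodal domains. Since $u_1$ and $u_2$ have disjoint supports, I can choose $(a_1, a_2) \neq (0,0)$ so that $v := a_1 u_1 + a_2 u_2$ satisfies $\int_{\partial \Sigma} v = 0$, which is a single linear condition on two unknowns. By disjointness of the supports (the overlap lies in the measure-zero nodal set), $\int_\Sigma |\nabla v|^2 = a_1^2 \int_\Sigma |\nabla u_1|^2 + a_2^2 \int_\Sigma |\nabla u_2|^2$ and likewise for $\int_{\partial \Sigma} v^2$, so $v$ again realizes the infimum $\sigma_1$ in \eqref{Esigma1}. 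By the equality case of the variational characterization recorded after \eqref{Esigma1}, $v$ is then a genuine first eigenfunction, hence harmonic and, by interior elliptic regularity, real-analytic in the interior of $\Sigma$. But $v$ vanishes identically on the nonempty open set $\Omega_3$, so unique continuation forces $v \equiv 0$, contradicting $(a_1, a_2) \neq (0,0)$. Thus $u$ has at most two nodal domains, and together with the lower bound it has exactly two.

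The step I expect to demand the most care is the promotion of the glued function $v$ from a mere minimizer to an honest smooth harmonic eigenfunction: the construction only guarantees that $v$ is piecewise defined and continuous, and it is the equality case of the variational principle together with elliptic regularity that upgrades it to a solution of \eqref{eqn: steklov} on the whole of $\Sigma$, to which unique continuation can be applied. I would also want to verify carefully that the measure-zero nodal set causes no difficulty in the integration-by-parts step and in the orthogonality-of-supports computations.
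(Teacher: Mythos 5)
Your argument is correct and follows essentially the same route as the paper: glue a nontrivial linear combination of $u$ restricted to two nodal domains so that its boundary integral vanishes, check via integration by parts that it achieves equality in the Rayleigh quotient \eqref{Esigma1}, and then invoke unique continuation to contradict its vanishing on a third nodal domain. The preliminary observation that every nodal domain must meet $\partial \Sigma$ also appears in the paper (there phrased via the maximum principle), so no further changes are needed.
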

\begin{proof}
Since $\int_{\partial \Sigma} u = 0$ and $u \not \equiv 0$, $u$ has at least two nodal domains.  Suppose for a contradiction that $u$ is a first eigenfunction and $\Omega_1, \Omega_2, \Omega_3$ are distinct, nonempty nodal domains.  We first claim that $\partial \Sigma \cap \partial \Omega_i \neq \emptyset$ for $i=1,2,3$.  To see this, observe that if there were an $i$ for which $\partial \Sigma \cap \partial \Omega_i = \emptyset$, then the maximum principle would imply $u\equiv 0$ on $\Omega_i$, and the unique continuation principle for elliptic equations \cite{A} would imply that $u\equiv 0$ on $\Sigma$, contrary to our assumption.

Therefore, there exist nonzero $c_1, c_2 \in \R$ such that the function $v$ defined by
\begin{equation}
\label{eqn: test}
v = 
  \begin{cases} 
      \hfill c_1 u  \hfill & \text{on}\quad \Omega_1 \\
      \hfill c_2 u  \hfill & \text{on}\quad \Omega_2\\
      \hfill 0 \hfill & \text{on} \quad \Sigma\setminus (\Omega_1 \cup \Omega_2)
        \end{cases}
\end{equation}
satisfies $\int_{\partial \Sigma} v = 0$.  Then by construction, $v \in \Ccal$.  Moreover, since $\frac{\partial v}{\partial \eta} = \sigma_1 v$ on $\partial \Sigma$, integration by parts yields
\begin{align}
\label{Eparts}
\int_{\partial \Sigma} \sigma_1 v^2 = \int_{\partial \Sigma} v \frac{\partial v}{\partial \eta} 
= \int_\Sigma \left| \nabla v\right|^2,
\end{align}

so $v$ is a first eigenfunction by \eqref{Esigma1}.  In particular, $\Delta v = 0$ on $\Sigma$.  On the other hand, $v$ vanishes on a nonempty open set by \eqref{eqn: test}, so the unique continuation property implies that $v\equiv 0$ on $\Sigma$, which contradicts our hypothesis. 
\end{proof}

\begin{definition}
\label{Dfbms}
We say the image of a minimal immersion $F: \Sigma \rightarrow B^n$ is a \emph{free boundary minimal surface} if $F(\partial \Sigma) \subset \partial B^n$ and $F(\partial \Sigma)$ intersects $\partial B^n$ orthogonally. 
\end{definition}
Abusing notation slightly by identifying $\Sigma$ with its image under $F$, the free boundary condition amounts to the condition that $\eta = X$ along $\partial \Sigma$, where $X$ is the position vector in $\R^n$.  Since the coordinate functions of any minimal immersion $F: \Sigma \rightarrow \R^n$ are harmonic and the free boundary condition implies $\frac{\partial x_i}{\partial \eta} = x_i$, we have the following.   
\begin{remark}
\label{Pcoord}
If $\Sigma \subset B^n$ is a free boundary minimal surface, the coordinate functions $x_1, \dots, x_n$ are Steklov eigenfunctions with eigenvalue $1$.  
\end{remark}

The \emph{critical catenoid} is a particular scaling of the catenoid in $\R^3$ that satisfies the free boundary condition.  It may be parametrized by $F_{cat}: [-\rho_0, \rho_0]\times [0, 2\pi]\rightarrow \R^3$, where 
\[ F_{cat}(r, \theta) = \frac{1}{\rho_0 \cosh \rho_0}( \cosh r \cos \theta, \cosh r \sin \theta, r)\]
and $\rho_0$ is defined to be the solution of the equation $\rho_0\tanh \rho_0 = 1$.

%
%
\section{Proof of Theorem \ref{Tmain}}
\label{Sproof}

Let $\Pi \subset \R^n$ be a hyperplane and let $R_\Pi: \R^n\rightarrow \R^n$ be the orthogonal reflection through $\Pi$. We say a surface $\Sigma$ is invariant under reflection through $\Pi$, or $R_\Pi$-invariant, if $R_\Pi(\Sigma) = \Sigma$.  
\begin{definition}
\label{Dsym}
Suppose $\Sigma$ is $R_\Pi$-invariant.  We define operators $\Acal_{\Pi}, \Scal_{\Pi} : C^0(\Sigma) \rightarrow C^0(\Sigma)$ by
\begin{align}
\label{Esym}
\Acal_\Pi u  = \frac{1}{2}\left(u - u \circ R_\Pi\right), \quad \Scal_\Pi u = \frac{1}{2}\left(u + u \circ R_\Pi\right).
\end{align}
\end{definition}

Clearly $\Acal_\Pi u$ and $\Scal_\Pi u$ are the antisymmetric and symmetric parts of $u$ about $\Pi$, i.e., $u = \Acal_{\Pi}  u + \Scal_{\Pi} u $ and
\[  \Acal_{\Pi}\circ R_\Pi  u = - \Acal_{\Pi}u,\quad \quad  \Scal_{\Pi}\circ R_\Pi  u = \Scal_\Pi u.\] 

If $u$ is a first eigenfunction, linearity of \eqref{eqn: steklov} and \eqref{Esym} implies $\Acal_\Pi u$ and $\Scal_\Pi u$ are eigenfunctions, in fact first eigenfunctions unless $\Acal_{\Pi} u \equiv 0$ or $\Scal_\Pi u \equiv 0$, respectively. 

\begin{lemma}
\label{Lsym}
If $\Sigma^2 \subset \R^n$ is $R_\Pi$-invariant,  $\sigma_1 < 1$, and $u$ is a first eigenfunction, then $u=\Scal_\Pi u$.  
\end{lemma}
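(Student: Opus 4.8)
The plan is to prove the conclusion directly by showing that the antisymmetric part $w := \Acal_\Pi u$ must vanish identically, so that $u = \Acal_\Pi u + \Scal_\Pi u = \Scal_\Pi u$. Suppose, for contradiction, that $w \not\equiv 0$. By the observation preceding the lemma (linearity of \eqref{eqn: steklov} together with \eqref{Esym}), $w$ is then itself a first eigenfunction, with eigenvalue $\sigma_1$; and by construction $w \circ R_\Pi = -w$, so $w$ vanishes on the fixed-point set $\Sigma \cap \Pi$. The goal is to contradict $\sigma_1 < 1$ by comparing $w$ on one half of $\Sigma$ against a distinguished positive eigenfunction of eigenvalue $1$.

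Since $R_\Pi$ preserves $\Sigma$ and its boundary $\partial \Sigma \subset \partial B^n$, the hyperplane $\Pi$ passes through the origin; let $\nu$ be a unit normal to $\Pi$ and set $\ell(x) = \langle x, \nu \rangle$. Being a linear combination of the coordinate functions, $\ell$ is by Remark \ref{Pcoord} a Steklov eigenfunction with eigenvalue $1$, it satisfies $\ell \circ R_\Pi = -\ell$, and it is positive on the half $\Sigma^+ := \Sigma \cap \{\ell > 0\}$ while vanishing on $\Sigma \cap \Pi$. I then pass to the mixed Steklov--Dirichlet problem on $\Sigma^+$, with the Dirichlet condition imposed on $\Sigma \cap \Pi$ and the Steklov condition on $\partial_S := \partial \Sigma \cap \{\ell > 0\}$.

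The central step is to show that the lowest mixed eigenvalue on $\Sigma^+$ equals $1$, with $\ell|_{\Sigma^+}$ as ground state. Since $\ell > 0$ on $\Sigma^+$, for any piecewise $C^1$ function $v$ on $\Sigma^+$ with $v|_{\Sigma \cap \Pi} = 0$ I would substitute $v = \ell\,\phi$ and use $\Delta \ell = 0$ in $\Sigma^+$ and $\partial \ell / \partial \eta = \ell$ on $\partial_S$ to obtain, after integrating by parts (the Dirichlet condition annihilating the boundary term on $\Sigma \cap \Pi$),
\[ \int_{\Sigma^+} |\nabla v|^2 = \int_{\Sigma^+} \ell^2 \left| \nabla (v/\ell) \right|^2 + \int_{\partial_S} v^2 \ \geq\ \int_{\partial_S} v^2. \]
Applying this to the admissible function $v = w|_{\Sigma^+}$ (admissible because $w = 0$ on $\Sigma \cap \Pi$, and $w|_{\Sigma^+} \not\equiv 0$ since otherwise antisymmetry would force $w \equiv 0$) gives $\int_{\Sigma^+} |\nabla w|^2 \geq \int_{\partial_S} w^2$. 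Finally, antisymmetry of $w$ makes $|\nabla w|^2$ and $w^2$ symmetric about $\Pi$, so $\int_{\Sigma^+} |\nabla w|^2 = \tfrac12 \int_\Sigma |\nabla w|^2$ and $\int_{\partial_S} w^2 = \tfrac12 \int_{\partial \Sigma} w^2$; combined with $\int_\Sigma |\nabla w|^2 = \sigma_1 \int_{\partial \Sigma} w^2$ from the integration by parts in \eqref{Eparts} and $\int_{\partial \Sigma} w^2 > 0$, the displayed inequality yields $\sigma_1 \geq 1$, the desired contradiction.

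I expect the main obstacle to be the rigorous justification of the ground-state substitution $v = \ell\,\phi$: one must control the quotient $\phi = v/\ell$ and the integration by parts near the set $\Sigma \cap \Pi$, where $\ell$ vanishes, and at the finitely many corner points where $\Pi$ meets $\partial \Sigma$. These are precisely the delicate points addressed by the symmetrization arguments of Choe and Soret \cite{Choe} and in \cite{Proc}; as an alternative that sidesteps the substitution, one can instead deduce that the lowest mixed eigenvalue equals $1$ from the positivity of $\ell|_{\Sigma^+}$, using that mixed eigenfunctions associated to distinct eigenvalues are $L^2(\partial_S)$-orthogonal, so that a positive eigenfunction must realize the lowest eigenvalue.
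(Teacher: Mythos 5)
Your argument is correct in substance, but it takes a genuinely different route from the paper. The paper's proof applies Lemma \ref{Lcourant} to the (assumed nonzero) antisymmetric part $\Acal_\Pi u$: having exactly two nodal domains that are swapped by $R_\Pi$ forces the positive nodal domain $\Omega$ to lie on one side of $\Pi$, so the product of $\Acal_\Pi u$ with the linear eigenfunction $\varphi=\langle X,w\rangle$ has a definite sign and a nonzero boundary integral, contradicting the $L^2(\partial\Sigma)$-orthogonality of eigenfunctions with eigenvalues $\sigma_1<1$ and $1$. You instead prove a two-piece--type inequality on the half-surface $\Sigma^+$: since $\ell>0$ there is a mixed Steklov--Dirichlet eigenfunction with eigenvalue $1$, it is the ground state of the mixed problem, so any competitor vanishing on $\Sigma\cap\Pi$ --- in particular $\Acal_\Pi u$ --- has mixed Rayleigh quotient at least $1$, which by antisymmetry equals $\sigma_1$. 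Both proofs hinge on Remark \ref{Pcoord} (the linear function is an eigenvalue-$1$ eigenfunction), but yours dispenses with the nodal domain count entirely and in fact proves more: every Steklov eigenfunction with eigenvalue below $1$ is $R_\Pi$-symmetric, not just first eigenfunctions. The price is the analysis you flag yourself --- justifying the Picone substitution $v=\ell\phi$ and the integration by parts near points of $\Sigma\cap\Pi$ where $\nabla^\Sigma\ell$ degenerates (or, in the alternative you sketch, setting up the mixed eigenvalue problem and the positivity-implies-ground-state argument); either can be made rigorous, as in Choe--Soret, but it is more machinery than the paper's short orthogonality-plus-sign argument. Two small points to tighten: note explicitly that $\int_{\partial\Sigma}(\Acal_\Pi u)^2>0$ because a nonzero harmonic function cannot vanish identically on $\partial\Sigma$, and that each connected component of $\Sigma^+$ must meet $\partial\Sigma$ (otherwise $\ell$ would be a positive harmonic function with zero boundary values), so the componentwise ground-state argument applies throughout.
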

\begin{proof}
Suppose for a contradiction that $\Acal_\Pi u\not\equiv 0$.  Then $\Acal_\Pi u$ is a first eigenfunction. By Lemma \ref{Lcourant}, $\Acal_\Pi u$ has exactly two nodal domains; let $\Omega$ be the nodal domain where $u>0$.  Since $\Acal_\Pi u$ is antisymmetric, its nodal set $\Ncal$ contains $\Sigma \cap \Pi$ and the second nodal domain is $R_\Pi(\Omega)$.  Since $\Pi$ disconnects $\Sigma$, $\Omega$ lies on one side of $\Pi$. 

Define $\varphi : = \langle X, w\rangle$, where $X\in \R^n$ is the position vector field and $w\in \R^n$ is a normal vector to $\Pi$ such that $\Omega \subset \{ \varphi>0\}$.      
 It follows that $\int_{\Omega} \varphi \Acal_\Pi u>0$.  Since $\Acal_\Pi \varphi = \varphi$, we have then
\begin{align}
\label{Eprod}
\int_{\partial \Sigma} \varphi  \Acal_\Pi u  &= \int_{\partial \Sigma \cap \Omega} \varphi \Acal_\Pi u  + \int_{\partial \Sigma \cap R_\Pi (\Omega)} \Acal_\Pi\varphi \, \Acal_\Pi u   = 2\int_{\partial \Sigma \cap\Omega} \varphi \Acal_\Pi u >0.
\end{align} 
On the other hand, Remark \ref{Pcoord} implies that $\varphi$ is an eigenfunction with eigenvalue $1$.  Since $L$ is self-adjoint and $\sigma_1<1$, we have 
$
\int_{\partial \Sigma}  \varphi  \Acal_\Pi u= 0, 
$
which contradicts \eqref{Eprod}.
\end{proof}

\begin{proof}[Proof of Theorem \ref{Tmain2}]
Let $\Sigma^2 \subset B^n$ be an embedded, $\Gcal$-invariant free boundary annulus, where $\Gcal = \langle R_{\Pi_1}, R_{\Pi_2}, R_{\Pi_3}\rangle$.  Without loss of generality, we may assume $\Pi_i$, $i=1,2,3$ are the first three coordinate planes.  By Theorem \ref{TFS}, it suffices to prove $\sigma_1(\Sigma) = 1$.  For the sake of a contradiction, suppose $\sigma_1(\Sigma)<1$ and let $u$ be a first eigenfunction.  

Since $\partial \Sigma \cap \left( B^n\setminus  \bigcup_{i=1}^3 \Pi_i\right) \neq \emptyset$, it follows from the symmetries that $\partial \Sigma \cap \text{int}(O) \neq \emptyset$, where $O = \{ (x_1, \dots, x_n) \in \R^n: x_1, x_2, x_3 \geq 0\}$. Let $\Gamma\subset \partial B^n$ be a boundary component of $\partial \Sigma$.  Since $\Sigma$ is embedded, any intersection of $\Gamma$ with one of the planes $\Pi_i$ is orthogonal. Since $\Sigma$ is an annulus, the symmetries imply there is at least one such intersection.  

Now consider the curve $\gamma := \Gamma \cap O$.  Since $\Gamma$ is an embedded $\Gcal$-invariant circle, $\gamma$ is connected, embedded and intersects exactly two (at first possibly indistinct) coordinate planes $\Pi_i$, $i=1,2,3$.  We parametrize $\gamma$ on $[0, 1]$ and may suppose without loss of generality that $\gamma(0) \in \Pi_1$.   Since $\Sigma$ is an annulus, the symmetries imply that $\gamma(1) \notin \Pi_1$.  Without loss of generality, we can assume $\gamma(1) \in \Pi_2$.  By the symmetries, the other boundary component of $\partial \Sigma$ is $R_{\Pi_3}(\Gamma)$.  Since $\Sigma$ is an annulus, this implies $\Sigma\cap \Pi_3$ is a circle, and moreover that $D: = \Sigma\cap O$ is bounded by four arcs $\gamma, e_1, e_2, e_3$, with $e_i \subset \Sigma\cap \Pi_i$, $i=1,2,3$.  In particular, $D$ is a fundamental domain for $\Sigma$.  

We first claim that $\Ncal$ contains a nodal line $\ell$ which intersects $\gamma$ at an interior point.  If not, then $u$ would not change sign on $\gamma$, and since $u$ is $\Gcal$-invariant and $D$ is a fundamental domain for $\Sigma$, we would have $\int_{\partial \Sigma} u \neq 0$, which contradicts \eqref{Esigma1}.  There are four cases, depending on whether the second intersection of $\ell$ with $\partial D$ is on $\gamma, e_1, e_2$, or $e_3$.
 
By Lemma \ref{Lsym}, $\Ncal$ is $\Gcal$-invariant.  If $\ell$ ends on $\gamma$,  $\Gcal$-invariance of $\Ncal$ implies $u$ has at least 9 nodal domains.
If $\ell$ ends on $e_1$,  $\Gcal$-invariance of $\Ncal$ implies $u$ has at least 5 nodal domains. In the same fashion, $u$ has at least 5 nodal domains if $\ell$ ends on $e_2$.  
Finally, if $\ell$ ends on $e_3$, $\Gcal$-invariance of $\Ncal$ implies $u$ has at least 4 nodal domains. 

Each of these cases contradicts Lemma \ref{Lcourant}, so it must be that $\sigma_1 = 1$.
\end{proof}

Similar reflection techniques were developed in \cite{Choe} in the setting of closed embedded minimal surfaces in the round three-sphere $S^3$.  These arguments together with ideas from the proof of Theorem \ref{Tmain2} above yield a new proof of a result of Ros (Theorem \ref{Tros} above).
\begin{proof}[Proof of Theorem \ref{Tros}]
Suppose $M\subset S^3$ is an embedded minimal $\Gcal$-invariant torus, where $\Gcal = \langle \Pi_1, \Pi_2, \Pi_3, \Pi_4\rangle$ and $\Pi_i, i=1, \dots, 4$ are the coordinate planes in $\R^4$.  Then $\Pi_4$ divides $M$ into two embedded annuli which intersect the two-sphere $\{(x_1, x_2, x_3, x_4) \in S^3: x_4 = 0\}$ orthogonally and are congruent by the reflection $R_{\Pi_4}$.  Stereographic projection from $(0, 0, 0, 1)$ gives a conformal identification 
\[ \{ (x_1, x_2, x_3, x_4) \in S^3: x_4\leq 0\} \cong B^3.\]
  Moreover, under this equivalence, $M\cap \{ x_4 \leq 0\}$ maps to an embedded annulus $\Sigma$ which is $R_{\Pi_i}$-invariant, $i=1, 2,3$ and intersects $\partial B^3$ orthogonally.  By the topological argument in the proof of Theorem \ref{Tmain} above, $\Sigma \cap O$ is bounded by four arcs and is in particular simply connected.  

Therefore, the inverse image of $\Sigma \cap O$ under stereographic projection is a simply connected fundamental domain for $M$.  We may then use Theorem 2 from \cite{Choe} to conclude the first eigenvalue $\lambda_1$ of the laplacian on $M$ is $2$.  By a result of Montiel and Ros (\cite{MR}, Theorem 4)  this implies $M$ is congruent to the Clifford torus.  
\end{proof}

%
%
\section{Surfaces $\Sigma$ with General Reflectional Symmetries}
\label{Sgeneral}
The results of this section should be compared with those of Section 4 of \cite{Choe}, where analogous results are discussed for the first eigenvalue of the Laplacian on embedded minimal surfaces in $S^3$.  
We assume now that $\Sigma \subset B^3$ is an embedded free boundary minimal surface invariant under a finite group of reflections $\Gcal$.  Moreover, we assume the pair $(\Sigma, \Gcal)$ satisfies the following conditions.  
\begin{enumerate}
\item[(C1)] The fundamental domain $W$ for $\Gcal$ in $B^3$ is a tetrahedral wedge bounded by planes $\Pi_i, i=1,2,3$ and $\partial B^3$.  
\item[(C2)] The fundamental domain $D: = W \cap \Sigma $ for $\Sigma$ intersects $\partial \Sigma$ in a single connected curve $\gamma$.  
\end{enumerate}
We do not require at first that $D$ is simply connected.  If $U\subset D$ and $U$ is a connected component of $\tilde{U}\cap D$, where $\tilde{U}$ is a nodal domain, we say $U$ is a \emph{nodal domain in $D$}.  

\begin{lemma}
\label{Ldomain}
Let $\Sigma \subset B^3$ be a $\Gcal$-invariant free boundary minimal surface satisfying (C1) and (C2) and suppose $\sigma_1(\Sigma)<1$.  Then $D$ has exactly two nodal domains $U$ and $V$, and each $\partial U$ and $\partial V$ has nonempty intersection with $\partial B^3$ and $\Pi_i$ for $i=1,2,3$.
\end{lemma}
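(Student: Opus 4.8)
The plan is to run a reflection-and-nodal-domain argument parallel to the proof of Theorem \ref{Tmain2}, but organized group-theoretically so that it applies to an arbitrary reflection group $\Gcal$ with fundamental domain as in (C1). Fix a first eigenfunction $u$. Applying Lemma \ref{Lsym} to each generating reflection $R_{\Pi_i}$ (legitimate since $\sigma_1(\Sigma)<1$) gives $u=\Scal_{\Pi_i}u$ for $i=1,2,3$, so $u$ is $\Gcal$-invariant; consequently the nodal set $\Ncal$ and the nodal domains of $u$ are $\Gcal$-invariant. By Lemma \ref{Lcourant} there are exactly two nodal domains on $\Sigma$, namely the connected sets $\Sigma^+=\{u>0\}$ and $\Sigma^-=\{u<0\}$ (one of each sign, since $\int_{\partial\Sigma}u=0$).

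The engine of the proof is the following reflection principle. Let $U$ be any nodal domain in $D$, say with $u>0$, and consider its orbit $\Gcal\cdot U\subset\Sigma^+$. I claim this orbit is open in $\Sigma$ with topological boundary contained in $\Ncal\cup\partial\Sigma$: at an interior point $p$ of a face $g(\Pi_i)$ with $u(p)>0$, the symmetry of $u$ across $\Pi_i$ forces a full neighborhood of $p$ into the orbit, so $p$ can be a boundary point only if $u(p)=0$. Since $\Sigma^+$ is connected and $\Gcal\cdot U$ is a nonempty subset that is both open and closed in it, $\Gcal\cdot U=\Sigma^+$. Intersecting with the open fundamental domain and using that $W$ is a fundamental domain (so translates of $\text{int}(W)$ are disjoint) gives $U=\Sigma^+\cap\text{int}(D)$; in particular this set is connected and is the \emph{unique} nodal domain in $D$ on which $u>0$. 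The same argument on the negative side produces a unique $V$, so $D$ has exactly two nodal domains.

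It remains to locate $\partial U$ and $\partial V$. For the intersection with $\partial B^3$: if $\overline U$ were disjoint from $\partial B^3$, then, since $\Gcal$ preserves $\partial B^3$, the whole orbit $\Sigma^+=\Gcal\cdot U$ would be disjoint from $\partial\Sigma$; but then the maximum principle applied to the harmonic function $u$, which vanishes on $\partial\Sigma^+\subset\Ncal$, together with the unique continuation property would force $u\equiv 0$, exactly as in Lemma \ref{Lcourant}. Hence $\partial U$, and likewise $\partial V$, meets $\partial B^3$. For the planes, note that in the tiling $\Sigma^+=\bigcup_{g\in\Gcal}g(U)$ the copies $g(U)$ and $gR_{\Pi_i}(U)$ are glued along the face $g(\Pi_i)$ precisely when $\overline U$ meets $\Pi_i$. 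Thus the face-adjacency graph of the tiling is the Cayley graph of $\Gcal$ on the generating set $S=\{R_{\Pi_i}:\overline U\cap\Pi_i\neq\emptyset\}$, and connectedness of $\Sigma^+$ forces $\langle S\rangle=\Gcal$. If $\partial U$ missed some $\Pi_j$, then $S\subseteq\{R_{\Pi_i}:i\neq j\}$, whose generated subgroup fixes the line $\bigcap_{i\neq j}\Pi_i$ pointwise and is therefore a proper subgroup of $\Gcal$ (the third wall-reflection moves that line); this would disconnect $\Sigma^+$, a contradiction. So $\partial U$, and likewise $\partial V$, meets each $\Pi_i$.

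I expect the last step to be the main obstacle to make fully rigorous. The clean equivalence ``$\Sigma^+$ connected $\iff$ $S$ generates $\Gcal$'' presumes that copies are joined only across their codimension-one faces; one must rule out spurious connections occurring solely along the edges $\Pi_i\cap\Pi_j$ or at wedge vertices, which amounts to a transversality statement for the nodal lines where they meet $\partial D$ (the analogue of the generic-position considerations in \cite{Choe}). Granting this, identifying the adjacency graph with a Cayley graph and checking that omitting one wall-reflection yields a proper subgroup of the spherical triangle group $\Gcal$ are both elementary.
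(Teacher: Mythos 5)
Your proposal is correct in substance and rests on the same pillars as the paper's proof (Lemma \ref{Lsym} to make $u$ and hence $\Ncal$ $\Gcal$-invariant, Lemma \ref{Lcourant} to get exactly two global nodal domains, and the reflection-group structure of the tiling of $\Sigma$ by copies of $D$), but you execute the two connectivity steps differently. For the count of nodal domains in $D$, the paper takes a path in $\tilde U$ joining two putative components of $\tilde U\cap D$ and folds it into $D$ by reflections; you instead show the $\Gcal$-orbit of one component is open and closed in $\Sigma^+$, hence all of it, and then intersect with $\mathrm{int}(W)$. These are dual formulations of the same idea; the paper's is more elementary, yours packages the uniqueness and the later wall argument into one framework. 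For the claim $\partial U\cap\Pi_j\neq\emptyset$, the paper uses $R_{\Pi_j}(\tilde U)=\tilde U$, folds a path from $p$ to $R_{\Pi_j}(p)$ into $D\cup R_{\Pi_j}(D)$ via the index-two subgroup, and derives a contradiction from the path having to cross $\Pi_j$; your Cayley-graph formulation (connectedness of $\Sigma^+$ forces the wall reflections meeting $\overline U$ to generate $\Gcal$) is a clean global version of the same obstruction, and your observation that omitting one wall leaves a subgroup fixing the opposite edge pointwise is the right reason it is proper. One remark on the caveat you raise: the ``spurious connections through edges or the vertex'' worry does not actually require any transversality of nodal lines. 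If a point $q\in\overline U$ lies on the edge $\Pi_i\cap\Pi_k$, then $\overline U$ meets both $\Pi_i$ and $\Pi_k$, so both reflections already belong to your set $S$, and the stabilizer of $q$ in $\Gcal$ is generated by the reflections in the walls through $q$, hence lies in $\langle S\rangle$; thus any identification of chambers occurring at $q$ is already accounted for in the Cayley graph on $S$. This disposes of the gap you flag, and the same point is implicitly glossed over in the paper's path-folding argument. Finally, note that your maximum-principle argument for $\partial U\cap\partial B^3\neq\emptyset$ avoids using (C2), whereas the paper deduces the sign change of $u$ on $\gamma$ directly from $\int_{\partial\Sigma}u=0$ and the connectedness of $\gamma$; either is fine.
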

\begin{proof}
Let $u$ be a first eigenfunction and let $\tilde{U}, \tilde{V}$ be the nodal domains for $u$ from  Lemma \ref{Lcourant}.  Because $\int_{\partial \Sigma} u =0$, $D$ is a fundamental domain for $\Sigma$, and $D \cap \partial \Sigma = \gamma$, $u$ changes sign on $\gamma$.  Therefore $u$ has at least two nodal domains in $D$.  

Suppose $\tilde{U}\cap D = U_1\cup U_2$ for some relatively open sets $U_1, U_2$ in $D$.  Let  $\tilde{\beta}: [0, 1]\rightarrow \tilde{U}$ be a path with $\tilde{\beta}(0) \in U_1$ and $\tilde{\beta}(1) \in U_2$.  Reflecting parts of $\tilde{\beta}$ by appropriate elements of $\Gcal$ gives rise to a path $\beta: [0, 1] \rightarrow  D$ with $\beta(0) \in U_1$ and $\beta(1) \in U_2$.  Therefore $U_1$ and $U_2$ are not disjoint and so $U: = D\cap \tilde{U}$ is connected.  The same argument shows $V: = D\cap \tilde{V}$ is connected,  hence $D$ has exactly two nodal domains, $U$ and $V$.  

Suppose for the sake of a contradiction that $\partial U \cap \Pi_i  = \emptyset$ for some $i\in \{1,2,3\}$.  By Lemma \ref{Lsym}, $R_{\Pi_i}(\tilde{U})$ is a nodal domain. Since $u$ takes different signs on $\tilde{U}$ and $\tilde{V}$, Lemma \ref{Lcourant} implies that $\tilde{U} =R_{\Pi_i}(\tilde{U})$.  Let $p\in \tilde{U}$.  Then there exists a continuous curve $\beta: [0, 1] \rightarrow \tilde{U}$ with $\beta(0) = p$ and $\beta(1) = R_{\Pi_i}(p)$.  

Let $H$ be the index two subgroup of $\Gcal$ naturally identified with the quotient $\Gcal / R_{\Pi_i}$.  In particular, $D \cup R_{\Pi_i}(D)$ is a fundamental domain for $H$.  After possibly reflecting parts of $\beta$ by elements of $H$, we may suppose that $\beta(t) \in \overline{U \cup R_{\Pi_i}(U)}$ for all $t\in [0, 1]$.  But $\Pi_i$ disconnects $\overline{U \cup R_{\Pi_i}(U)}$, which implies there is a $t_0\in (0, 1)$ such that $\gamma(t_0) \in \Sigma \cap \Pi_i \subset V$.  This a contradiction.  
\end{proof}

\begin{theorem}
\label{Tgeneral}
Suppose $\Sigma^2 \subset B^3$ is a $\Gcal$-invariant free boundary minimal surface satisfying (C1) and (C2).  Suppose $D$ is simply connected and $\partial D$ consists of at most $5$ edges.  Then $\sigma_1(\Sigma) =1$.  
\end{theorem}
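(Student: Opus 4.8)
The plan is to argue by contradiction, following the skeleton of the proof of Theorem \ref{Tmain2} but replacing the explicit nodal‑domain count by a connectivity analysis of the tiling $\{g(D)\}_{g\in\Gcal}$, which is precisely what makes the numerical bound ``at most five edges'' enter. By Remark \ref{Pcoord} one always has $\sigma_1(\Sigma)\le 1$, so it suffices to exclude $\sigma_1(\Sigma)<1$. Assume this and let $u$ be a first eigenfunction. By Lemma \ref{Lsym}, $u\circ R_{\Pi_i}=u$ for $i=1,2,3$, so $u$ is $\Gcal$-invariant and its nodal set $\Ncal$ is $\Gcal$-invariant. By Lemma \ref{Ldomain}, $D$ has exactly two nodal domains $U=\{u>0\}\cap D$ and $V=\{u<0\}\cap D$, and each of $\partial U,\partial V$ meets $\partial B^3$ and every $\Pi_i$. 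First I would record that, since $D$ is simply connected with exactly two nodal domains and (as recalled in Section \ref{Ssteklov}) $\Ncal$ is a union of finitely many $C^1$ arcs meeting in finitely many points, $\Ncal\cap D$ must be a single embedded arc $\ell$ meeting $\partial D$ in exactly two points $p,q$; an interior branch point or a second arc would produce a third nodal domain in the disk $D$. Thus $p,q$ split $\partial D$ into an arc $P$ with $u>0$ and an arc $N$ with $u<0$, and these are the only two sign changes of $u$ along $\partial D$.

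Next I would translate ``$\Sigma$ has exactly two nodal domains'' into a statement about the tiling. Since $u$ is a first eigenfunction on $\Sigma$, Lemma \ref{Lcourant} forces $\{u>0\}=\bigcup_{g\in\Gcal}g(U)$ and $\{u<0\}=\bigcup_{g\in\Gcal}g(V)$ to each be connected. Call $\Pi_i$ \emph{$U$-active} if $u>0$ on some edge of $\partial D$ lying in $\Pi_i$, and \emph{$V$-active} if $u<0$ there. Two chambers sharing a wall on $g(\Pi_i)$ have their $U$-copies glued exactly when $\Pi_i$ is $U$-active, and by $\Gcal$-invariance of $u$ this criterion is uniform across all $\Pi_i$-walls; hence the adjacency graph of the $U$-copies is the Cayley graph of $\Gcal$ on the set of $U$-active reflections, which is connected if and only if those reflections generate $\Gcal$. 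Because (C1) gives a nondegenerate tetrahedral wedge, $\Gcal$ is a triangle reflection group in which no two of $R_{\Pi_1},R_{\Pi_2},R_{\Pi_3}$ generate the whole group. Therefore connectedness of $\{u>0\}$ forces all three $\Pi_i$ to be $U$-active, and likewise all three to be $V$-active.

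It then remains to count edges. Every $\Pi_i$ must carry an edge of $\partial D$ with $u>0$ and an edge with $u<0$, yet $u$ changes sign along $\partial D$ only at $p$ and $q$. As in the proof of Lemma \ref{Ldomain}, $\int_{\partial\Sigma}u=0$ together with $\Gcal$-invariance forces $u$ to change sign on $\gamma$, so one endpoint, say $p$, lies in the interior of $\gamma$; and both endpoints on $\gamma$ is impossible, since it would leave a nodal domain disjoint from all $\Pi_i$, against Lemma \ref{Ldomain}. Hence only $q$ is available to create sign changes among the plane edges. A plane with no split edge needs two edges, one in $P$ and one in $N$, while $q$ can split at most one plane edge; running the short case analysis on $q$ (interior of a plane edge, a vertex between two plane edges, or the vertex where $\gamma$ meets a plane) each case either forces at least five edges on $\Pi_1,\Pi_2,\Pi_3$ — hence at least six edges of $\partial D$ in total, contradicting the hypothesis — or, in the last case, contradicts Lemma \ref{Ldomain} directly. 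This contradiction yields $\sigma_1(\Sigma)=1$; note that Theorem \ref{TFS} is not invoked here, since the conclusion concerns only $\sigma_1$.

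The step I expect to be the main obstacle is the tiling/generation argument of the second paragraph: making precise that the number of nodal domains of $\Sigma$ equals the number of connected components of the $U$- and $V$-copies after gluing, and reducing their connectedness to whether the active reflections generate $\Gcal$. The supporting group-theoretic input — that all three simple reflections of a tetrahedral-wedge reflection group are needed to generate it, with degenerate wedges excluded by (C1) — should be stated carefully, and the final case check, though elementary, is exactly where the threshold ``five edges'' is consumed, so the vertex cases must be made explicit rather than waved away.
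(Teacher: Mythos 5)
Your argument is correct and follows essentially the same route as the paper's proof: assume $\sigma_1<1$, show the nodal set meets the simply connected fundamental domain $D$ in a single arc $\ell$ with one endpoint interior to $\gamma$, and then count edges against the requirement (Lemma \ref{Ldomain}) that both nodal domains in $D$ have boundary meeting all three planes. The Cayley-graph/tiling analysis in your second paragraph is a correct but dispensable detour --- Lemma \ref{Ldomain} already supplies that each of $\partial U$, $\partial V$ meets every $\Pi_i$, which is all the paper uses for the final count --- though your ``active wall'' formulation does make the vertex cases in the edge count slightly cleaner.
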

\begin{proof}
Suppose on the contrary that $\Sigma$ is as above and $\sigma_1(\Sigma)<1$.  Then $\partial D$ decomposes into a curve $\gamma$ along $\partial B^3$ and a sequence of no more than four edges $e_j$, each of which is contained in one of the planes $\Pi_i$.  As in the proof of Theorem \ref{Tmain2}, $D$ contains a nodal line $\ell$ which intersects an interior point of $\gamma$.  Since $D$ is simply connected, $\ell$ divides $D$.
We next claim that $\Ncal = \ell$.  If not, $\Ncal$ contains a second nodal line $\ell'$ and either \newline
(i). $\ell'$ is a closed curve in $D$.\newline
(ii). $\ell'$ intersects $\partial D$ twice and is disjoint from $\ell$.\newline
(iii). $\ell'$ intersects $\ell$.\newline
Since $D$ is simply connected, each of the above cases implies $D$ has at least three nodal domains, which contradicts Lemma \ref{Ldomain}.  Therefore, $\ell = \Ncal$, and so $\ell$ divides $D$ into exactly two nodal domains $U$ and $V$.

Since $\partial D$ consists of at most four edges other than $\gamma$, either $U$ or $V$ is bounded by edges intersecting at most two of the three planes $\Pi_i$.  This contradicts Lemma \ref{Ldomain}.
\end{proof}

\begin{corollary}
\label{CZ}
The surfaces $\Sigma_n$ and $\tilde{\Sigma}_n$ described by Folha, Pacard and Zolotareva in \cite{Z} have first Steklov eigenvalue equal to $1$.  
\end{corollary}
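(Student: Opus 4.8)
The plan is to apply Theorem \ref{Tgeneral} to each of the two families $\Sigma_n$ and $\tilde{\Sigma}_n$, reducing everything to a verification of hypotheses (C1), (C2), and the geometric conditions on the fundamental domain $D$. Since these surfaces are, by construction in \cite{Z}, invariant under a group of reflections, the main task is to identify the reflection group $\Gcal$ and check that the fundamental domain is a tetrahedral wedge bounded by three coordinate planes and $\partial B^3$, so that (C1) holds. First I would recall the symmetries described in \cite{Z}: the genus-$0$ family $\Sigma_n$ and the genus-$1$ family $\tilde{\Sigma}_n$ each have a large number of boundary components arranged with a cyclic symmetry of order $n$ about a vertical axis, together with a horizontal reflection and a vertical reflection. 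The group $\Gcal$ generated by these reflections has a tetrahedral wedge as fundamental domain in $B^3$, cut out by the horizontal plane, a vertical plane through the axis, and a second vertical plane at angle $\pi/n$; together with $\partial B^3$ these are the three planes $\Pi_i$ and the spherical face required by (C1).

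Next I would verify (C2): that $D = W \cap \Sigma$ meets $\partial \Sigma$ in a single connected boundary arc $\gamma$. This follows from the way the boundary circles are distributed among the fundamental domains of the $\Gcal$-action — each wedge $W$ captures exactly one arc of one boundary component, since the $n$-fold symmetry permutes the boundary circles (or half-circles) so that precisely one piece lies in each wedge. I would then count the edges of $\partial D$. The boundary $\partial D$ consists of $\gamma$ along $\partial B^3$ together with arcs lying in the planes $\Pi_i$; by the explicit description of the surfaces in \cite{Z}, the fundamental domain is a topological disk (hence simply connected) whose boundary, apart from $\gamma$, is cut into at most four edges, one or two on each bounding plane. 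This keeps the total edge count at or below five, so the hypotheses of Theorem \ref{Tgeneral} are met.

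With (C1), (C2), simple connectivity, and the edge bound all in place, Theorem \ref{Tgeneral} immediately yields $\sigma_1(\Sigma_n) = \sigma_1(\tilde{\Sigma}_n) = 1$, completing the proof. The main obstacle, and the part requiring genuine care rather than routine checking, is the precise combinatorial geometry of the fundamental domain: one must confirm both that $D$ is simply connected (which for the genus-$1$ family $\tilde{\Sigma}_n$ is less obvious, since the handle could in principle intersect the wedge $W$) and that the edge count does not exceed five. For $\tilde{\Sigma}_n$ I would argue that the extra genus is accounted for by the identification pattern under $\Gcal$ rather than by any topology internal to $W$, so that a single wedge still meets $\Sigma$ in a simply connected piece with the required number of edges. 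Establishing this carefully from the construction in \cite{Z} is where the real work lies; once it is done, the conclusion is automatic.
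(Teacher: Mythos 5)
Your proposal is correct and follows essentially the same route as the paper: identify the reflection group generated by the horizontal plane, a vertical plane through the axis, and a second vertical plane at angle $\pi/n$, verify that the fundamental domain in the tetrahedral wedge is a simply connected piece with at most five boundary edges, and invoke Theorem \ref{Tgeneral}. You also correctly anticipate the one genuinely delicate point, namely that for $\tilde{\Sigma}_n$ the extra genus comes from the identifications under $\Gcal$ rather than from topology inside the wedge; the paper makes this concrete by observing that the catenoidal neck at the origin contributes a fifth edge lying in the horizontal plane.
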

\begin{proof}
Let $\{e_1, e_2, e_3\}$ be the standard orthonormal frame for $\R^3$. Define 
\begin{align*}
v_1 =  -\sin \left(\frac{\pi}{n}\right)e_1 + \cos\left( \frac{\pi}{n}\right) e_2,  \quad v_2= e_2, \quad v_3 = e_3 
\end{align*}
and $\Gcal_n = \langle R_{\Pi_{v_1}}, R_{\Pi_{v_2}}, R_{\Pi_{v_3}}\rangle$.
The surfaces in \cite{Z} are invariant under a group of isometries $\mathfrak{S}_n \subset O(3)$ (cf. the discussion above Theorem 1.1 in \cite{Z}), and it is straightforward to see that $\mathfrak{S}_n = \Gcal_n$.  

 In particular, the fundamental domain $D: = W\cap S_n$ is simply connected and consists of four edges $\gamma, e_1, e_2, e_3$ with $\gamma \in \partial B^3$ and $e_i \in \Pi_{v_i}, i=1,2,3$.  The fundamental domain $D'_n: = W\cap \Sigma_n$ is a small normal perturbation of $D_n$ and consequently $\partial D'_n$ consists of four edges $\gamma', e'_1, e'_2, e'_3$ with $\gamma' \in \partial B^3, e'_i\in \Pi_{v_i}, i=1,2,3$.  Thus, Theorem \ref{Tgeneral} implies $\sigma_1(\Sigma_n) = 1$.  

An analogous argument applies to the surfaces $\tilde{\Sigma}_n$, except that the approximately catenoidal neck centered at the origin means the fundamental domain $\tilde{D}'_n: = \tilde{\Sigma}_n\cap W$ consists of five edges $\gamma, e_i', i=1,\dots, 4$ with $e'_i\in \Pi_{v_i}, i=1,2,3$ and $e_4 \in \Pi_{v_3}$.  In any case, we may again appeal to Theorem \ref{Tgeneral} to conclude that $\sigma_1(\tilde{\Sigma}_n) = 1$.  
\end{proof}

Corollary \ref{CZ} gives more evidence to the statement in \cite{Z} that $\Sigma_n$ are likely congruent to a family of surfaces $M_n$ with $\sigma_1(M_n)=1$ previously constructed by Fraser and Schoen (Theorem 5.16, \cite{FS}).

%
%

\end{document}